\documentclass[12pt]{amsart}

\usepackage{amsmath}
\usepackage{amssymb}
\usepackage{amsthm}
\newtheorem{theorem}{Theorem}[section]
\newtheorem{lemma}[theorem]{Lemma}

\theoremstyle{definition}
\newtheorem{remark}[theorem]{Remark}

\newcommand{\tr}{\triangleright}

\begin{document}

\title{Endofunctors of Quandles and Racks}
\author{Peter Ulrickson}

\address{Department of Mathematics, The Catholic University of America, Washington, DC}
\email{ulrickson@cua.edu}

\maketitle

\begin{abstract}

 We show that the only endofunctors of the category of quandles commuting with the forgetful functor to sets are the power operations. We also give a similar statement for racks.
\end{abstract}

\section{Introduction}

Given a quandle $(X, \tr)$ the assignments $(X,\tr) \mapsto (X, (x,y) \mapsto x \tr^n y)$ for integers $n$ yield endofunctors of the category of quandles. Here $x \tr^n y$ denotes the element obtained through the $n$-fold application of $x \tr -$, i.e. the element $x \tr (x \tr (x \ldots (x \tr y ) \ldots ))$. In \cite{Szymik} it is noted that these functors, referred to as power operations, commute with the forgetful functor to sets and thus correspond to endofunctors of the algebraic theory of quandles. A question left open there is what other set-preserving endofunctors of the category of quandles there might be. In fact, these power operations are the only such endofunctors.

\begin{remark}
  Lawvere \cite{Lawvere} describes the correspondence between functors of categories of algebras for algebraic theories which preserve underlying sets and functors between algebraic theories. See also \cite{Freyd}, which refers to such underlying set preserving functors as \textit{Lawvere functors}. Thus we may speak of endofunctors of the algebraic theory of quandles while in fact working with endofunctors of the category of quandles (or racks) preserving underlying sets.
\end{remark}

\begin{remark}
  In \cite{PS} the power operations are treated in a more general setting. There, families of mutually distributive operations are considered. Converting from right to left distributivity, their Lemma 1 notes the requisite left self-distributivity of a composition of a left self-distributive operation with itself.
\end{remark}

Recall that any group $G$ has an associated quandle known as the conjugation quandle of $G$. The underlying set of this quandle is the underlying set of the group, endowed with the operation $x \tr y := xyx^{-1}$.

\begin{remark}
  There are two conventions for quandles and racks. The one we follow here is that of \cite{Szymik}, where $\tr$ is left self-distributive and each $x \tr -$ is bijective. Others, including \cite{PS}, consider operations $*$ which are right self-distributive and such that each $-*x$ is bijective.
\end{remark}

\section{Free Quandles and Conjugation Quandles of Free Groups}

There is a free functor from sets to quandles which is a left adjoint to the functor which forgets the binary operation. One construction of the free quandle on a given generating set is by combining elements of the set using the symbols $\tr$ and $\tr^{-1}$, considering such words modulo equivalence by the requisite relations for quandles. The symbol $\tr^{-1}$ is the binary operation satisfying $x \tr ( x \tr^{-1} y) = y$ for all $x$ and $y$ in the generating set. Note that self-distributivity of $\triangleright^{-1}$ is the same as that for $\triangleright$. Moreover, the two are mutually-distributive in the terminology of \cite{PS}, as in the following equation.
\begin{equation*}
x \triangleright (y \triangleright^{-1} z) = (x \triangleright y) \triangleright ^{-1}(x \triangleright z)
\end{equation*}

Elements in a free quandle can be put in a standard form. Given an element $q$ in the free quandle, and an element $x$ of the generating set, we refer to elements $x \tr q$ and $x \tr^{-1} q$ as left multiplications by $x$.

\begin{lemma} \label{leftMultiplications}
  Every element in a free quandle can be written as a composition of left multiplications by generating elements.
\end{lemma}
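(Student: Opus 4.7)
The plan is to proceed by structural induction on a word representing the element, with a nested secondary induction for the key combining step.

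The base case is immediate: a single generator is, by convention, a composition of zero left multiplications and so is already in the desired form. For the inductive step, an arbitrary non-generator word has the shape $W = U \circ V$ with $\circ \in \{\tr, \tr^{-1}\}$. By the structural induction hypothesis, $U$ and $V$ are equivalent in the free quandle to words of the standard form
\begin{equation*}
a_1 \tr^{\epsilon_1}\!\bigl(a_2 \tr^{\epsilon_2}\!\bigl(\cdots a_k\bigr)\bigr), \qquad b_1 \tr^{\delta_1}\!\bigl(b_2 \tr^{\delta_2}\!\bigl(\cdots b_\ell\bigr)\bigr),
\end{equation*}
with each $a_i, b_j$ a generator and each $\epsilon_i, \delta_j \in \{\pm 1\}$. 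The remaining task is to show that combining two such standard-form expressions via $\tr$ or $\tr^{-1}$ again yields a standard-form expression.

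I would isolate this as an auxiliary claim: if $u$ and $v$ are in standard form and $\delta \in \{\pm 1\}$, then $u \tr^\delta v$ can be rewritten in standard form. I would prove it by a secondary induction on the number of left multiplications appearing in $u$. If $u$ is a single generator there is nothing to do, as we simply prepend $u \tr^\delta$ to $v$. Otherwise write $u = a \tr^\epsilon u'$ with $u'$ standard and containing one fewer left multiplication, and apply the identity
\begin{equation*}
(a \tr^\epsilon u') \tr^\delta v \;=\; a \tr^\epsilon\!\bigl(u' \tr^\delta (a \tr^{-\epsilon} v)\bigr),
\end{equation*}
valid for all four sign choices of $\epsilon, \delta$. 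This is a rearrangement of the self-distributivity of $\tr$ when $\epsilon = \delta = 1$, is exactly the mutual-distributivity identity displayed earlier in the paper when $\epsilon = 1$ and $\delta = -1$, and the $\epsilon = -1$ cases follow from these by applying $a \tr^{-1}$ to both sides. Since $a \tr^{-\epsilon} v$ is again in standard form (prepending one left multiplication preserves the form) and $u'$ has strictly fewer left multiplications than $u$, the inductive hypothesis applies to $u' \tr^\delta (a \tr^{-\epsilon} v)$, and prefixing the resulting standard form by $a \tr^\epsilon$ closes the induction.

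The main obstacle is choosing the right complexity measure. The rewriting identity duplicates the generator $a$, so the total number of $\tr$-symbols in the word strictly increases at each step; induction on raw word length therefore cannot work. The nested induction above circumvents this because the quantity that decreases — the number of leading left multiplications in $u$ — is insensitive to the growth of $v$ along the way.
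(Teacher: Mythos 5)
Your proof is correct and follows essentially the same route as the paper: the identity $(a \tr^{\epsilon} u') \tr^{\delta} v = a \tr^{\epsilon}(u' \tr^{\delta}(a \tr^{-\epsilon} v))$ is exactly the paper's displayed rewriting rule $(x\tr y)\tr - = x \tr (y \tr (x \tr^{-1} -))$, and your nested induction is a careful unpacking of the paper's ``repeating such a rewriting as necessary'' combined with its outer induction on the number of operations. You have simply made explicit what the paper leaves implicit, namely the four sign cases and the correct induction measure (the number of left multiplications in $u$, quantifying over all $v$), which is a genuine improvement in rigor but not a different argument.
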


\begin{proof}
  First, observe the following equality. \[(x\triangleright y) \triangleright - = x \triangleright ( y \triangleright (x \triangleright^{-1} -))\] Repeating such a rewriting as necessary, it follows that multiplying  on the left by any composition of left multiplications is equivalent to applying a composition of left multiplications.

The case considered above shows that all elements of the free quandle involving only two operations can be written as a composition of left multiplications. By induction on the number of operations one sees that every element can be expressed as a composition of left multiplications by generating elements.
\end{proof}

We now relate endofunctors of the algebraic theory of quandles with elements of the free quandle on a two-element generating set. This was noted in Remark 5.5 of \cite{Szymik}.

\begin{lemma} \label{restriction}
  Endofunctors of the algebraic theory of quandles are determined by their restriction to the free quandle on a set with two elements.
\end{lemma}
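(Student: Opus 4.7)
The plan is to realize an endofunctor of the algebraic theory of quandles as a family of maps $\psi_n : Q_n \to Q_n$ (one for each arity $n$, with $Q_n$ the free quandle on $n$ generators), and then use Lemma \ref{leftMultiplications} to reduce all $\psi_n$ to $\psi_2$.

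First I would set up the correspondence between endofunctors of the theory and such families. Being identity on objects and sending projections to projections, an endofunctor assigns to each element of $Q_n$ (an $n$-ary operation) another element; functoriality of composition then translates to the substitution compatibility
\[ \psi_n\bigl(h(f_1,\ldots,f_m)\bigr) = \psi_m(h)\bigl(\psi_n(f_1),\ldots,\psi_n(f_m)\bigr) \]
for $h \in Q_m$ and $f_1,\ldots,f_m \in Q_n$, while sending projections to projections amounts to $\psi_n$ fixing each generator.

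Next I would argue by induction that $\psi_n$ is determined by $\psi_2$. Using Lemma \ref{leftMultiplications}, any $q \in Q_n$ may be written as $q = a \tr^{\epsilon} r$, where $a$ is a generator, $\epsilon \in \{\pm 1\}$, and $r$ is itself a composition of fewer left multiplications. Viewing $q = h(a,r)$ as the substitution of $a$ and $r$ into the element $h = x \tr^{\epsilon} y \in Q_2$, the compatibility above gives
\[ \psi_n(q) = \psi_2(h)\bigl(a,\psi_n(r)\bigr), \]
using that $\psi_n$ fixes the generator $a$. By induction $\psi_n(r)$ is determined by $\psi_2$, hence so is $\psi_n(q)$; the base case of a single generator is immediate.

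The main obstacle is the setup of the first paragraph, making precise the translation between set-preserving endofunctors of the category of quandles and families $\{\psi_n\}$ satisfying the substitution and projection conditions. Once that is settled, the inductive step using Lemma \ref{leftMultiplications} is a direct substitution calculation, and in fact reveals the stronger statement that the two elements $\psi_2(x \tr y)$ and $\psi_2(x \tr^{-1} y)$ already determine the endofunctor.
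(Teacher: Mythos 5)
Your argument is correct, but it takes a genuinely different route from the paper. The paper's proof is a one-step appeal to the universal property: for any quandle $Q$ and any pair $a,b \in Q$ there is a unique quandle morphism $f$ from the free quandle on $\{x,y\}$ with $f(x)=a$, $f(y)=b$; since the endofunctor leaves the underlying set map of $f$ unchanged, the new product $a*b$ in $Q$ must equal $f$ applied to the element $x*y$ of the free quandle. This determines the functor on \emph{all} quandles at once, with no induction and no use of Lemma \ref{leftMultiplications}. Your proof instead stays inside the Lawvere-theory picture, encoding the endofunctor as substitution-compatible maps $\psi_n$ on free quandles and inducting on the normal form of Lemma \ref{leftMultiplications} to reduce $\psi_n$ to $\psi_2$. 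Both routes lean on the same background fact, namely the correspondence of Remark 1 between set-preserving endofunctors and endomorphisms of the theory (you flag this as your remaining obstacle; the paper likewise defers it to Lawvere), so neither is more self-contained on that front. What your version buys is the explicit substitution formalism and the sharper observation that $\psi_2(x \tr y)$ and $\psi_2(x \tr^{-1} y)$ alone determine everything --- and in fact, since $x \tr (x \tr^{-1} y) = y$ forces $\psi_2(x \tr^{-1} y)$ to be the inverse operation for $\psi_2(x\tr y)$, the single element $\psi_2(x \tr y)$ suffices, which is exactly the form in which the paper uses the lemma later. What the paper's version buys is brevity and the fact that it handles arbitrary (non-free) quandles directly rather than only the free ones. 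One small simplification available to you: the inductive step does not require the first argument to be a generator, since you could apply the substitution identity to $q = r \tr^{\epsilon} s$ with both $r$ and $s$ shorter words and induct on both; so Lemma \ref{leftMultiplications} is a convenience rather than a necessity here.
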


\begin{proof}
  Once the new product $x * y$ is determined for the free quandle on two elements $x$ and $y$, the products on all quandles are determined, since for any pair of elements $a$ and $b$ in any quandle there is a unique quandle morphism extending $x \mapsto a$, $y \mapsto b$ and the functor must leave underlying set maps unchanged.
\end{proof}

Given an element $w$ in the free quandle on $\{x,y\}$ we now produce two elements in the free group on $\{x,y_1,y_2\}$. Let $g_i$ be the morphism from the free quandle on $\{x,y\}$ to the conjugation quandle of the free group on $\{x,y_1,y_2\}$ determined by the set map sending $x$ to $x$ and $y$ to $y_i$ for $i=1,2$. Then two group elements, denoted $w_1$ and $w_2$, are determined by $w_i := g_i(w)$.

\begin{lemma}
  If $w_1w_2^{-1}$ cyclically reduces to $y_1y_2^{-1}$, then $w_1$ is $x^kyx^{-k}$ for an integer $k$.
\end{lemma}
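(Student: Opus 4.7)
The plan is to use Lemma~\ref{leftMultiplications} to express $w$ as a composition of left multiplications on a base generator $b_0 \in \{x, y\}$. Under the quandle morphism $g_i$, each left multiplication by a generator $a$ becomes conjugation by $a^{\pm 1}$ in the free group $F' = \langle x, y_1, y_2\rangle$, so
\[
w_i = \sigma_i(u)\, \sigma_i(b_0)\, \sigma_i(u)^{-1},
\]
where $u \in F = \langle x, y\rangle$ records the sequence of left multiplications and $\sigma_i \colon F \to F'$ is the homomorphism fixing $x$ and sending $y$ to $y_i$. Abelianization immediately forces $b_0 = y$: if $b_0 = x$ then $[w_1 w_2^{-1}] = 0$ in the abelianization of $F'$, contradicting $[y_1 y_2^{-1}] = y_1 - y_2$.

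The next step is to normalize $u$. Since $w_i = \sigma_i(u y u^{-1})$ depends on $u$ only as an element of $F$, we may take $u$ to be freely reduced. Moreover, $(v y^{\pm 1}) y (v y^{\pm 1})^{-1} = v y v^{-1}$ in $F$, so any trailing $y^{\pm 1}$ letter of $u$ may be stripped without changing $w_i$. Iterating, we may assume $u$ is reduced and either empty or ending in $x^{\pm 1}$. Factor $u = x^k D$ where $x^k$ is the maximal initial run of $x^{\pm 1}$-letters, so $D$ is empty or begins with $y^{\pm 1}$ (and in the latter case still ends in $x^{\pm 1}$). The goal is to show $D$ is empty, which yields $u = x^k$ and hence $w_1 = x^k y_1 x^{-k}$.

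The heart of the proof is then a cancellation analysis in
\[
w_1 w_2^{-1} = x^k\, \sigma_1(D)\, y_1\, \sigma_1(D)^{-1}\, \sigma_2(D)\, y_2^{-1}\, \sigma_2(D)^{-1}\, x^{-k},
\]
where the sole interior cancellation $x^{-k} x^k \to 1$ has already been carried out. Assuming $D$ is nonempty, every remaining junction is non-cancelling: boundaries meeting a $y_j^{\pm 1}$ are flanked by $x^{\pm 1}$ (from the ends of $\sigma_j(D)$, or from the outer $x^{\pm k}$), and the central splice $\sigma_1(D)^{-1} \sigma_2(D)$ is protected by $y_1 \ne y_2$. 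Cyclic reduction therefore only strips the outer $x^k \cdots x^{-k}$, leaving the cyclically reduced word $Y = \sigma_1(D)\, y_1\, \sigma_1(D)^{-1}\, \sigma_2(D)\, y_2^{-1}\, \sigma_2(D)^{-1}$, which begins with $y_1^{\pm 1}$, ends with $y_2^{\mp 1}$, and has length $4|D|+2 \ge 6$. This contradicts the hypothesis that the cyclic reduction of $w_1 w_2^{-1}$ is $y_1 y_2^{-1}$, so $D$ must be empty. The delicate part is the junction-by-junction cancellation check; the normalization of $u$ is what ensures the required junctions are safe.
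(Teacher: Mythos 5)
Your proof is correct and follows essentially the same route as the paper: write $w_1$ as a conjugate $u\,y_1\,u^{-1}$, normalize the conjugator so that it ends in an $x$-letter with its maximal initial $x$-power split off, and observe that any remaining $y$-letters in the conjugator survive free and cyclic reduction of $w_1w_2^{-1}$, leaving a cyclically reduced word strictly longer than $y_1y_2^{-1}$. Your write-up is somewhat more careful than the paper's---notably the abelianization step ruling out the base generator $x$ and the explicit junction-by-junction cancellation check---but the underlying argument is the same.
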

\begin{proof}
  The element $w_1$ is represented by a word with the form
  \begin{equation*}
x^{e_1}y_1^{f_1}x^{e_2}\ldots x^{e_k}y_1x^{-e_k}\ldots y_1^{-f_1}x^{-e_1}
\end{equation*}
and thus $w_1w_2^{-1}$ takes the form
\begin{equation*}
  x^{e_1}y_1^{f_1}x^{e_2}\ldots x^{e_k}y_1x^{-e_k}\ldots y_1^{-f_1}x^{-e_1}x^{e_1}y_2^{f_1}x^{e_2}\ldots x^{e_k}y_2^{-1}x^{-e_k}\ldots y_2^{-f_1}x^{-e_1}
\end{equation*}
which cyclically reduces to
\begin{equation*}
  y_1^{f_1}x^{e_2}\ldots x^{e_k}y_1x^{-e_k}\ldots y_1^{-f_1}y_2^{f_1}x^{e_2}\ldots x^{e_k}y_2^{-1}x^{-e_k}\ldots y_2^{-f_1}
\end{equation*}
If $f_1$ is non-zero this word is cyclically reduced, and hence cannot yield $y_1y_2^{-1}$. Thus $w_1$ must be of the form $x^ky_1x^{-k}$.
\end{proof}

We now determine the elements in the free quandle that lead to free group elements of the form considered in the previous construction.

\begin{lemma} \label{GroupToQuandle}
  Let $w$ be an element of the free quandle on $\{x,y\}$. If the image of $w$ under the morphism to the conjugation quandle of the free group on $\{x,y\}$ induced by the identity map on generating sets has the form $x^kyx^{-k}$ for an integer $k$, then $w = x \tr^ky$.
\end{lemma}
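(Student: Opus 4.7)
The plan is to combine Lemma \ref{leftMultiplications} with a centralizer computation in the free group, and then use the quandle axioms to translate the resulting group-theoretic information back to the free quandle.

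By Lemma \ref{leftMultiplications}, I can write $w = L_{z_1}^{\epsilon_1} \cdots L_{z_n}^{\epsilon_n}(v)$, where $L_z$ abbreviates $z \tr -$, each $z_i, v \in \{x,y\}$, and each $\epsilon_i \in \{\pm 1\}$. Setting $u := z_1^{\epsilon_1} \cdots z_n^{\epsilon_n}$ as an element of $F(x,y)$, and letting $\phi$ denote the given morphism to the conjugation quandle, we have $\phi(w) = u v u^{-1}$. The hypothesis $\phi(w) = x^k y x^{-k}$ forces $v = y$, since distinct generators of a free group lie in distinct conjugacy classes, and then forces $x^{-k} u$ to commute with $y$. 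Because the centralizer of $y$ in $F(x,y)$ is $\langle y \rangle$, this gives $u = x^k y^m$ for some $m \in \mathbb{Z}$.

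To conclude, I would use that each $L_z$ is a bijection with inverse $L_z^{-1}$, which follows from the axiom $z \tr (z^{-1} \tr r) = r$. Thus cancellations of adjacent $L_z L_z^{-1}$ or $L_z^{-1} L_z$ do not change the composite operator, so $L_{z_1}^{\epsilon_1} \cdots L_{z_n}^{\epsilon_n} = L_x^k L_y^m$ as operators on the free quandle. The idempotency $y \tr y = y$ (and similarly $y \tr^{-1} y = y$) yields $L_y^m(y) = y$, so $w = L_x^k(y) = x \tr^k y$.

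The main technical point is the observation that cancellations in the word do not change the composite operator, or equivalently, that the action of $F(x,y)$ on the free quandle by compositions of left multiplications is well-defined. This is a routine consequence of the quandle axioms, but it is the step that genuinely uses the quandle structure beyond the normal form supplied by Lemma \ref{leftMultiplications}. The rest of the argument is standard free-group algebra.
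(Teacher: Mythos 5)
Your proof is correct and follows the same route as the paper: apply Lemma \ref{leftMultiplications} to write $w = L_{z_1}^{\epsilon_1}\cdots L_{z_n}^{\epsilon_n}(v)$, compute its image $uvu^{-1}$ in the free group, and analyze when this can equal $x^k y x^{-k}$. Your version is in fact more complete than the paper's terse argument: where the paper only asserts that the extraneous occurrences of $y$ must cancel, you justify the conclusion via the centralizer computation $C(y)=\langle y\rangle$, the well-defined action of $F(x,y)$ on the free quandle by left multiplications, and the idempotency $y \tr y = y$ to absorb the residual $y^m$ (one minor typo: the axiom is $z \tr (z \tr^{-1} r) = r$, not $z \tr (z^{-1} \tr r) = r$).
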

\begin{proof}
  The element $w$ may be written as a composition of left multiplications, as in Lemma \ref{leftMultiplications}, so that we have 
\begin{equation*}
w = z_1 \triangleright ^{e_1} (z_2 \triangleright ^{e_2} (z_3 \triangleright ^{e_3} (\ldots \triangleright (z_m \triangleright ^{e_m} y)))\ldots)
\end{equation*}
The exponents $e_i$ are either $1$ (yielding the ordinary operation) or $-1$ (yielding the inverse operation). The terms $z_i$ are either $x$ or $y$. The element in the free group on $x$ and $y$ produced from such a quandle element is represented by $(\Pi z_i^{e_i})y(\Pi z_i^{-e_i})$, and such a word is equivalent to $x^kyx^{-k}$ only if any extraneous initial occurences of $y$ are eliminated by adjacent occurrences of $y$ with exponent of opposite sign. 
\end{proof}

\section{Classification of Endofunctors}

 A one-relator group is a group with a presentation $\langle S|\{r\} \rangle $ where the set of relations consists of a single relation $r$. Magnus \cite{Magnus} showed the following, referred to as the Freiheitssatz, for one-relator groups.

\begin{theorem}[Freiheitssatz]
  Suppose that $\langle \{a_1,\ldots,a_n,x\}| \{r\} \rangle $ is a presentation of a group $G$ with a single relation $r$. If $x$ cannot be eliminated from $r$ through cyclic reduction, then $\{a_1,\ldots,a_n\}$ generates a free subgroup in $G$.
\end{theorem}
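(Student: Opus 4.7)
The plan is to follow Magnus's approach via induction on a cyclically reduced form of $r$, exploiting the rewriting procedure (now usually called Magnus rewriting) that anticipates the theory of HNN extensions. First I would reduce to the case that $r$ is cyclically reduced; the hypothesis then says that $r$ genuinely involves $x$. The base case is $r$ of length one, where $r$ must equal $x^{\pm 1}$, and $G$ is then visibly free on $\{a_1,\ldots,a_n\}$.

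For the inductive step, the first scenario to handle is that some generator has exponent sum zero in $r$. Suppose $x$ has exponent sum zero (the case where some $a_i$ does is symmetric and easier). I would introduce conjugate symbols $a_{i,k}$ for $i=1,\ldots,n$ and $k\in\mathbb{Z}$, meant to play the role of $x^{-k}a_ix^k$. Because the exponent sum of $x$ in $r$ vanishes, $r$ rewrites as a word $r'$ using only finitely many $a_{i,k}$, and $G$ appears as the HNN extension of the base group $B:=\langle a_{i,k}\mid r'\rangle$ with stable letter $x$, whose associated subgroups are successive integer shifts of the finite generating set that appears in $r'$. By induction applied to $B$ (which has a shorter relator in an appropriate measure), the requisite subgroups of $B$ generated by the original $a_i$'s are free; Britton's lemma for HNN extensions then forces $\langle a_1,\ldots,a_n\rangle$ to be free in $G$.

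The second scenario is that every generator appears with nonzero exponent sum in $r$. Here I would apply the Magnus trick of a Tietze substitution: introduce a new generator $t$ and replace one of the original generators by a word in $t$ and $x$ so chosen that, in the resulting one-relator presentation of a group containing $G$, some generator has exponent sum zero in the new relator. This returns us to the first scenario, and one then has to trace through the substitution to recover the freeness statement for $\langle a_1,\ldots,a_n\rangle$ in the original $G$.

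The hard part will be setting up the correct inductive measure and verifying that $\langle a_{i,k}\mid r'\rangle$ really presents the base of an HNN decomposition of $G$ with the advertised associated subgroups. The naive length of $r'$ in the new alphabet can exceed $|r|$, so induction must be taken on a more refined quantity (for instance the lexicographic pair given by the number of occurrences of $x$ in $r$ together with $|r|$, or on $|r|$ after a careful accounting of how rewriting affects it). Once that bookkeeping is in place, Britton's lemma together with the inductive freeness statement delivers the Freiheitssatz.
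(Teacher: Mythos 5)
The paper does not prove this statement: it is quoted as a classical result of Magnus, with a citation standing in for the proof, so there is no internal argument to compare yours against. Your outline is the standard modern route to the Freiheitssatz (Magnus's induction as recast by McCool and Schupp in the language of HNN extensions and Britton's lemma), and as a sketch it identifies all the right ingredients: cyclic reduction, the exponent-sum-zero case with the conjugate generators $a_{i,k}$ and the stable letter $x$, the Tietze substitution to force an exponent sum of zero when none exists, and the need for a strong enough inductive hypothesis.

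Two points deserve tightening. First, your worry that the rewritten relator $r'$ might be longer than $r$ is misplaced in the standard setup: counted in the alphabet $\{a_{i,k}\}$, the length of $r'$ equals the number of occurrences of the $a_i$ in $r$, since each occurrence of $x^{\pm 1}$ is absorbed into a subscript shift; because $x$ genuinely occurs in the cyclically reduced $r$, this is strictly less than $|r|$, and plain induction on $|r|$ suffices. Second, the inductive statement must be formulated in its strong form --- every subset of the generators omitting at least one generator that occurs in the relator generates a free subgroup --- because the associated subgroups of the HNN decomposition are generated by shifted families $a_{i,k}$ with $k$ ranging over an interval, not by the original $a_i$ alone; one needs both that these are free and that the shift induces an isomorphism between them before Britton's lemma applies. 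With those adjustments the outline is a faithful skeleton of Magnus's proof, though filling in the bookkeeping (in particular verifying that $\langle a_{i,k} \mid r' \rangle$ really is the base of an HNN decomposition of $G$, which itself uses the inductive hypothesis) is where essentially all of the work lies.
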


By constructing suitable one-relator groups we can use this theorem to exclude certain operations as possible endofunctors of the category of quandles.

\begin{theorem}
  The only endofunctors of the algebraic theory of quandles are the power operations.
\end{theorem}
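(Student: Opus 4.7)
The plan is to use Lemma~\ref{restriction} to reduce to identifying the element $w$ of the free quandle on $\{x,y\}$ that encodes the new product, and then to show that $w_1 w_2^{-1}$ cyclically reduces to $y_1 y_2^{-1}$ in the free group $F(\{x,y_1,y_2\})$. Once this is established, the lemma preceding Lemma~\ref{GroupToQuandle} forces $w_1 = x^k y_1 x^{-k}$ for some integer $k$, and Lemma~\ref{GroupToQuandle} then concludes that $w = x \tr^k y$, the desired power operation.

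The key construction is the one-relator group $G = \langle x, y_1, y_2 \mid w_1 w_2^{-1} \rangle$. Applying the endofunctor to the conjugation quandle of $G$ yields a new quandle operation $*$ on $G$. Naturality, together with the fact that $g_i$ followed by the quotient $F(\{x,y_1,y_2\}) \to G$ is the unique quandle morphism from the free quandle on $\{x,y\}$ sending $x, y \mapsto x, y_i$, gives $x * y_1 = w_1$ and $x * y_2 = w_2$ as elements of $G$. The defining relation of $G$ equates these, and the quandle axiom that $x * -$ is a bijection then forces $y_1 = y_2$ in $G$. Consequently $y_1 y_2^{-1}$ lies in the normal closure of $w_1 w_2^{-1}$ in $F(\{x,y_1,y_2\})$. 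The reverse inclusion is automatic, since the homomorphism $F(\{x,y_1,y_2\}) \to F(\{x,t\})$ sending $y_1, y_2 \mapsto t$ annihilates $w_1 w_2^{-1}$ (both factors map to $w(x,t)$) and has kernel $\langle\langle y_1 y_2^{-1} \rangle\rangle$. The two normal closures therefore coincide.

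Magnus's theorem on the uniqueness of relators states that two cyclically reduced elements of a free group generating the same normal closure are conjugate up to inversion; applied here it yields that $w_1 w_2^{-1}$ cyclically reduces to $y_1 y_2^{-1}$, completing the proof. This last step is the main obstacle. Using only the Freiheitssatz one can show that $x$ does not appear in the cyclic reduction (otherwise $\{y_1, y_2\}$ would generate a rank-two free subgroup of $G$, contradicting $y_1 = y_2$), and a further application to the resulting one-relator group on $\{y_1, y_2\}$ makes that group infinite cyclic, so its relator is a primitive element of $F(\{y_1, y_2\})$ with abelianization $(1,-1)$. Identifying such a primitive with $y_1 y_2^{-1}$ is where the Freiheitssatz alone is not sufficient; one must invoke either Magnus's uniqueness theorem above or the classification of primitive elements in the free group of rank two.
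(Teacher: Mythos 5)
Your proposal is correct, but it finishes by a genuinely different route than the paper. The two arguments coincide up to the construction of the one-relator group $G = \langle \{x,y_1,y_2\} \mid \{w_1w_2^{-1}\}\rangle$ and the use of bijectivity of $x * -$ on the image of the conjugation quandle of $G$. The paper then splits into cases on the cyclic reduction of $w_1w_2^{-1}$: if $x$ survives, the Freiheitssatz makes $\{y_1,y_2\}$ generate a free subgroup of $G$, so $y_1 \ne y_2$ while $x * y_1 = x * y_2$, excluding $w$; if $x$ does not survive, the explicit normal form computed in the unlabelled lemma preceding Lemma \ref{GroupToQuandle} --- which exploits the fact that $w_1$ and $w_2$ are \emph{parallel} conjugates of $y_1$ and $y_2$, differing only by the relabelling $y_1 \mapsto y_2$ --- already forces the cyclic reduction to be exactly $y_1y_2^{-1}$ and $w_1 = x^k y_1 x^{-k}$, with no further input. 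You instead deduce $y_1 = y_2$ in $G$ unconditionally, correctly establish $\langle\langle w_1w_2^{-1}\rangle\rangle = \langle\langle y_1y_2^{-1}\rangle\rangle$ (both inclusions are argued soundly), and then invoke Magnus's uniqueness-of-relator theorem; this works (the exponent sums in $y_1$ and $y_2$ rule out the inverted conjugate, a small step you should make explicit before feeding the conclusion into the unlabelled lemma), but it imports a result considerably deeper than the Freiheitssatz. Your closing claim that ``the Freiheitssatz alone is not sufficient'' is where you and the paper diverge: it \emph{is} sufficient here, precisely because $w_1w_2^{-1}$ is not an arbitrary cyclically reduced word normally generating $\langle\langle y_1y_2^{-1}\rangle\rangle$ but a product of parallel conjugates whose cyclic reduction the paper computes by hand. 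The trade-off is clear: your route skips the case analysis and the normal-form computation at the cost of a heavier black box (uniqueness of relators, or the classification of primitives in the free group of rank two), while the paper's stays elementary modulo the Freiheitssatz.
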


\begin{proof}
  As was already noted in Lemma \ref{restriction}, such an endofunctor is determined by its restriction to the free quandle on $\{x,y\}$. Let $w$ be an element in this free quandle. Produce elements $w_1$ and $w_2$  in the free group on $\{x,y_1,y_2\}$ as above, and consider the one-relator group $G$ with presentation $\langle \{x,y_1,y_2\}|\{w_1w_2^{-1}\}\rangle$.

Suppose that a cyclically reduced expression for $w_1w_2^{-1}$ contains all three generators. By the Freiheitssatz the set $\{y_1,y_2\}$ generates a free subgroup, so that $y_1$ and $y_2$ are distinct. Use $w$ to produce a new binary operation $*$ on the (underlying set of the) conjugation quandle of $G$. This operation fails to define a quandle, since $x *y_1 = x*y_2$ even though $y_1 \ne y_2$. Thus $w$ does not yield an endofunctor of the algebraic theory of quandles.

If $w_1w_2^{-1}$ contains only $y_1$ and $y_2$ then it is equivalent to $y_1y_2^{-1}$, and Lemma \ref{GroupToQuandle} states that $w$ is of the form $x \tr ^k y$, which gives a power operation.
\end{proof}

The same reasoning which led to the previous theorem also allows for the classification of endofunctors of the algebraic theory of racks. Recall that a rack is a set with a binary operation $\tr$ which satisfies the same left self-distributivity condition as for quandles and for which $x \tr -$ is bijective. What is not required is that $x \tr x = x$. There is a forgetful functor from quandles to racks whereby we view the conjugation quandle of a group as a rack.

Just as in the case of quandles, the distributivity and bijectivity mean that one can write general elements of the free rack on two elements as a composition of left multiplications using $\tr$ and $\tr^{-1}$. A word $w$ in the free rack determining an endofunctor cannot contain terms of the form $y \tr x$ by the same Freiheitssatz argument. On the other hand, terms of the form $y \tr y$ cannot be reduced in a free rack, so that we have the following classification.

\begin{theorem}
  The only endofunctors of the algebraic theory of racks are of the form $x \tr^k F^j(y)$, where $F: y \mapsto y \tr y$ is the canonical automorphism of a rack and $k$ and $j$ are integers.
\end{theorem}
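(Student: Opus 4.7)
The plan mirrors the quandle proof, keeping track of where the lack of idempotence in racks produces the additional $F^j$ factor. By the rack analogs of Lemmas \ref{leftMultiplications} and \ref{restriction} (whose proofs use only self-distributivity), any such endofunctor is determined by an element $w$ in the free rack on $\{x, y\}$, which may be written as $z_1 \tr^{e_1}(z_2 \tr^{e_2}(\cdots (z_m \tr^{e_m} u)))$ with $z_i, u \in \{x, y\}$ and each $e_i \in \{1, -1\}$. Form $w_1, w_2$ in the free group on $\{x, y_1, y_2\}$ and the one-relator group $G = \langle \{x, y_1, y_2\} \mid \{w_1 w_2^{-1}\} \rangle$ exactly as in the quandle theorem.

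The Freiheitssatz argument then runs without change: if a cyclically reduced expression for $w_1 w_2^{-1}$ contains $x$, then $\{y_1, y_2\}$ generates a free subgroup of $G$, so $y_1 \neq y_2$, while the operation $*$ induced from $w$ on the conjugation rack of $G$ satisfies $x * y_1 = w_1 = w_2 = x * y_2$, violating the bijectivity of $x * -$ required for racks. A short case analysis rules out $u = x$: either $w_1 = w_2$ directly (if none of the $z_i$ is $y$, so that $x * -$ is constant and fails injectivity) or $w_1 w_2^{-1}$ cyclically reduces to a nontrivial commutator of $x$ with an element outside $\langle x \rangle$, which contains $x$. With $u = y$, a similar analysis of the cyclic reduction forces the reduced form of $z_1^{e_1} \cdots z_m^{e_m}$ in the free group on $\{x, y\}$ to lie in the coset $x^k \langle y \rangle$ for some integer $k$. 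Equivalently, the image of $w$ in the conjugation rack of the free group on $\{x, y\}$ is $x^k y x^{-k}$.

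It remains to identify the elements of the free rack having this group image, which is the rack analog of Lemma \ref{GroupToQuandle}. In the quandle case, idempotence $y \tr y = y$ collapsed this fiber to the single element $x \tr^k y$. For racks, $F(y) = y \tr y$ is distinct from $y$ in the free rack but has the same group image; iterating gives a $\mathbb{Z}$-family $F^j(y) = y \tr^j y$ with common group image $y$, and each $x \tr^k F^j(y)$ has image $x^k y x^{-k}$. The main obstacle is showing these exhaust the fiber. This is cleanest using the standard description of the free rack on a set $S$ as pairs $(g, s)$ with $g$ in the free group on $S$ and $s \in S$ and operation $(g, s) \tr (h, t) = (g s g^{-1} h, t)$: the fiber of $(g, s) \mapsto g s g^{-1}$ over $x^k y x^{-k}$ is exactly $\{(x^k y^j, y) : j \in \mathbb{Z}\} = \{x \tr^k F^j(y) : j \in \mathbb{Z}\}$, giving the desired classification.
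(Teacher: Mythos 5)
Your proposal is correct and follows the same overall strategy as the paper's (admittedly very compressed) treatment of the rack case: reduce to the free rack on $\{x,y\}$ written via left multiplications, form the one-relator group $\langle \{x,y_1,y_2\}\mid\{w_1w_2^{-1}\}\rangle$, and use the Freiheitssatz to show that any $w$ whose associated relator retains $x$ after cyclic reduction destroys bijectivity of $x*-$. Where you genuinely improve on the paper is the last step: the paper disposes of it with the one-line remark that terms of the form $y \tr y$ cannot be reduced in a free rack, whereas you invoke the standard model of the free rack as pairs $(g,s)$ with $(g,s)\tr(h,t)=(gsg^{-1}h,t)$ and compute the fiber of $(g,s)\mapsto gsg^{-1}$ over $x^kyx^{-k}$ exactly, obtaining $\{(x^ky^j,y)\}=\{x\tr^kF^j(y)\}$. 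This buys a complete and rigorous rack analog of Lemma \ref{GroupToQuandle}, which the paper never actually states. The one place you should add detail is the case analysis for words based at $x$: the claim that $w_1w_2^{-1}$ is conjugate to $[x,P_1^{-1}P_2]$ and that this commutator, when the reduced form of $P$ contains $y$, cyclically reduces to a word containing $x$ is true but deserves the short verification (write $P_1^{-1}P_2 = x^aRx^b$ with $R$ not beginning or ending in a power of $x$ and check no cancellation occurs).
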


\begin{remark}
  As was observed in the remark of \cite{Szymik} raising the question addressed here, Kan \cite{Kan} showed that the algebraic theory of groups admits only the identity endofunctor.
\end{remark}

\vskip.1in

\noindent
\textbf{Acknowledgements:} I thank M. Szymik for his comments on a draft of this note.

\bibliographystyle{hplain}
\bibliography{endofunctorsOfQuandles.v2}

\end{document}